\journal{Statistics \& Probability Letters}
\newtheorem{theorem}{Theorem}
\newtheorem{lemma}[theorem]{Lemma}
\newtheorem{remark}{Remark}
\newcommand{\argmin}{\operatornamewithlimits{arg\ min}}
\begin{document}

\begin{frontmatter}



\title{ First-order weak balanced schemes for bilinear stochastic differential equations \tnoteref{t1}}

\tnotetext[t1]{Partially supported by FONDECYT Grant 1110787. 
Moreover, HAM was partially supported by CONICYT Grant 21090691,
as well as, CMM was partially supported by BASAL Grants PFB-03 and FBO-16.}

\author[label1]{H. A. Mardones } \ead{hmardones@ing-mat.udec.cl}

\author[label1]{C. M. Mora } \ead{cmora@ing-mat.udec.cl}

\address[label1]{Departamento de Ingenier\'{\i}a Matem\'{a}tica and CI$^2$MA, Universidad de Concepci\'{o}n, Chile.}

\begin{abstract}

We use the linear scalar SDE as a test problem
to show that it is possible to construct almost sure stable first-order weak balanced schemes based on the addition of stabilizing functions to the drift terms. Then, we design balanced schemes for multidimensional bilinear SDEs achieving the first order of weak convergence, which do not involve multiple stochastic integrals. To this end, we follow two methodologies to find appropriate stabilizing weights; through an optimization procedure or based on a closed heuristic formula.
Numerical experiments show a promising performance of the new numerical schemes.

\end{abstract}

\begin{keyword}
Numerical solution \sep stochastic differential equations \sep weak error 


\MSC 65C30 \sep 60H35 \sep 60H10 \sep 65C05 \sep 93E15
\end{keyword}

\end{frontmatter}


\section{Introduction}

Consider the autonomous stochastic differential equation (SDE) 
\begin{equation}
\label{eq:1.1}
 X_{t}=X_{0} + \int_{0}^{t}b\left(  X_{s}\right)  ds+ \sum_{k=1}^{m}\int_{0}^{t}\sigma^{k} \left(X_{s}\right)  dW^{k}_{s},
\end{equation}
where 
$\left(X_t\right)_{t\geq0} $ is an adapted $\mathbb{R}^{d}$-valued stochastic process,
$b, \sigma^k:\mathbb{R}^{d}\rightarrow\mathbb{R}^{d}$ are smooth functions
and
$W^1,\ldots,W^m$ are independent standard Wiener processes.
For solving (\ref{eq:1.1})
in cases the diffusion terms $\sigma^{k}$  play an essential role in the dynamics of $X_t$,
Milstein, Platen and Schurz \cite{Milstein1998} introduced the balanced method
\begin{equation}
 \label{eq:1.7}
\begin{split}
 Z_{n+1}
 & = 
   Z_{n} + b\left(   Z_n \right) \Delta +
  \sum_{k=1}^{m} \sigma^{k} \left( Z_n \right)  \left(  W^k_{\left(n+1 \right) \Delta}-W^k_{n \Delta}\right)
  \\
  & \quad
  + \left(
 c^{0} \left(  Z_n \right) \Delta
  + \sum_{k=1}^{m} c^{k} \left(  Z_n \right) \left\vert W^k_{\left(n+1 \right) \Delta} - W^k_{n \Delta}\right\vert
  \right)
   \left(  Z_n -  Z_{n+1} \right) ,
\end{split}
\end{equation}
where
 $\Delta > 0$ and 
 $c^0,c^1,\ldots,c^m$ are weight functions that should be appropriately chosen for each SDE.
Up to now,
the schemes of type (\ref{eq:1.7})  
use the damping functions $c^1,\ldots,c^m$ to avoid the numerical instabilities caused by $\sigma^k$ or to have $a.s.$ positivity
(see, e.g., \cite{Milstein1998,Alcock2006,Schurz2005,Schurz2012,Tretyakov2013}),
and hence
their rate of  weak convergence is equal to $1/2$, which is low. 
To the best of our knowledge,
concrete balanced versions of the Milstein scheme have been developed only in 
particular cases, like $m=1$,  
where the Milstein scheme does not involve multiple stochastic integrals with respect to different Brownian motions
\cite{Alcock2012,Kahl2006}.

We are interested in the development of efficient first weak order schemes for computing 
$\mathbb{E} f \left( X_t \right) $,
with $f:\mathbb{R}^{d}\rightarrow\mathbb{R}$ smooth.
This motivates the design of balanced schemes based only on  $c_0$,
whose rate of weak convergence  is equal to $1$ under general conditions (see, e.g.,  \cite{Schurz2005,Schurz2012}).
In this direction, \cite{Schurz2005,Schurz2012} propose to take 
 $
 c^0 = 0.5 \, \nabla b
 $,
 together with $c^1 = \cdots = c^m = 0$.
This choice contains no information about the diffusion terms $\sigma^{k}$,
and yields unstable schemes in situations like 
$
 dX_t = \lambda X_{t} \, dW^1_{t} ,
$
with $\lambda > 0$ and $X_t \in \mathbb{R}$;
a test equation used to introduce the balanced schemes (see \cite{Milstein1998}).

This paper addresses the question of whether we can find $c^{0}$ such that 
\begin{equation}
 \label{eq:1.3}
  Z_{n+1} =   Z_{n} 
 		 + b\left(   Z_n \right) \Delta + c^{0} \left(  \Delta,  Z_n \right) \left(  Z_{n+1} - Z_n  \right)  \Delta 
		 + \sum_{k=1}^{m} \sigma^{k} \left( Z_n \right) \sqrt{\Delta}  \xi_{n}^{k} 
\end{equation}
reproduces the long-time behavior of $X_t$,
where from now on $\xi_{0}^{1}, \xi_{0}^{2}, \ldots , \xi_{0}^{m}, \xi_{1}^{1}, \ldots$ are independent random variables 
satisfying $\mathbf{P}\left(  \xi_{n}^{k}= \pm 1\right)  = 1/2$.
Section \ref{sec:LinearScalarSDEs} gives a positive answer to this problem
when (\ref{eq:1.1}) reduces to the classical  scalar SDE
\begin{equation}
 \label{eq:2.1}
X_{t}
=
X_{0}
+ \int_{0}^{t}\mu X_{s}ds
+ \int_{0}^{t}\lambda X_{s}dW^1_{s} 
\end{equation}
where $\mu, \lambda \in \mathbb{R}$.
Indeed,
we obtain an explicit expression for $c^{0} \left(  \Delta, \cdot \right)$
that makes $Z_{n} $  almost sure asymptotically stable for all $\Delta >0$
whenever $2\mu-\lambda^{2} < 0$, as well as positive preserving.
In Section \ref{sec:system},
we propose an optimization procedure for identifying a suitable weight function  $c_0$
in case $b, \sigma^k:\mathbb{R}^{d}\rightarrow\mathbb{R}^{d}$ are linear,
and we also provides a choice of $c_0$ based on a heuristic closed formula.
Both techniques show good results in our numerical experiments,
which encourages further studies of (\ref{eq:1.3}).
All proofs are deferred to Section \ref{sec:Proofs}.

\section{Stabilized Euler scheme for the linear scalar SDE}
\label{sec:LinearScalarSDEs}

In this section $X_t$ satisfies (\ref{eq:2.1}), 
which is a classical test equation for studying
the stability properties of the numerical schemes for (\ref{eq:1.1})
(see, e.g., \cite{Alcock2006,Higham2000,Higham2007}).
We assume,  for simplicity, that $2\mu-\lambda^{2} < 0$.
Set $T_n = n \Delta$, where $ \Delta > 0$ and $n = 0, 1, \ldots $
For all $t\in\left[T_{n},T_{n+1}\right]$ we have 
\[
X_{t}
=
X_{T_n}
+ \int_{T_n}^{t} \left( \mu X_{s} + a \left( \Delta \right) X_s - a \left( \Delta \right) X_s \right) ds
+ \int_{T_n}^{t}\lambda X_{s}dW^1_{s} ,
\]
where $ a \left( \Delta \right)$ is an arbitrary real number.
Then
\[
X_{T_{n+1}}
\approx
X_{T_n}
+  \mu X_{T_n} \Delta + a \left( \Delta \right) \left( X_{T_{n+1}} - X_{T_n} \right) \Delta 
+ \lambda X_{T_n} \left( W^1_{T_{n+1}}  -  W^1_{T_{n}}  \right) ,
\]
and so  $X_t$ is weakly approximated by the recursive scheme
\begin{equation}
\label{eq:3.2}
Y^{s}_{n+1}
=
Y^{s}_{n}
+  \mu Y^{s}_{n} \Delta + a \left( \Delta \right) \left( Y^{s}_{n+1} - Y^{s}_{n} \right) \Delta 
+ \lambda Y^{s}_{n} \sqrt{\Delta} \xi^{1}_{n}.
\end{equation}
In case $a \left( \Delta \right) \Delta \neq 1$, we have
$$
 Y^{s}_{n+1}
=
Y^{s}_{n}
\left(  1+ \left( \mu\Delta+\lambda\sqrt{\Delta} \xi^{1}_{n} \right) / \left(1-a \left( \Delta \right) \Delta \right) \right).
$$
We wish to find a locally bounded function $\Delta \mapsto a \left( \Delta \right) $ such that:
\begin{description}
 \item[P1)]  $Y^{s}_{n}$ preserves $a.s.$ the sign of  $Y^{s}_{0}$ for all $n\in\mathbb{N}$.
 
 \item[P2)] $Y^{s}_{n} $ converges almost surely   to $0$ as $n \rightarrow \infty$ whenever $2\mu-\lambda^{2} < 0$.
\end{description}

We check easily that Property P1 holds iff  
$
 a \left( \Delta \right)
 \in
\left]  -\infty, p_1 \right[  \cup \left]  p_{2} ,+\infty\right[ ,
$
with
$p_{1} := \min \left\{  1 ,   1-\left\vert \lambda\right\vert \sqrt{\Delta}+\mu\Delta  \right\}  / \Delta$
and
$p_{2} :=\max \left\{  1,   1+\left\vert \lambda\right\vert \sqrt{\Delta}+\mu\Delta   \right\}  / \Delta$.
A close look at
$
 \mathbb{E}  \log \left(
 1+ \left( \mu\Delta+\lambda\sqrt{\Delta}\xi^1_{n} \right) / \left( 1-a \left( \Delta \right)  \Delta \right)
  \right)
$
reveals that:

\begin{lemma}
\label{lem:EstLinear}
Suppose that
$a \left( \Delta \right) \Delta\neq1$.
Then,
a necessary and sufficient condition for
Property P1,
together with
$
\lim_{n \rightarrow \infty} Y^{s}_{n}  = 0
$
$a.s.$,
is that
$$
\begin{cases}
 a \left( \Delta \right) \in\left]  -\infty,p_{1} \right[  \cup \left]  p_{2},p_{3} \right[ ,
 &  \text{in case }  \mu<0
 \\
  a \left( \Delta \right) \in\left]  -\infty,p_{1} \right[  \cup\left] p_{2},+\infty\right[ ,
 & \text{in case }  \mu=0 \text{ and } \lambda \neq 0

  \\
a \left( \Delta \right) \in\left]  p_{3} ,p_{1} \right[ \cup\left]  p_{2}, +\infty\right[  ,
 & \text{in case } \mu > 0
\end{cases}
$$
where
$p_{3} := \left( \mu^{2}\Delta+2\mu-\lambda^{2}\right)  /\left(  2\mu\Delta\right)  $.
\end{lemma}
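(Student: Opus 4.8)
The plan is to exploit the explicit multiplicative representation of the scheme. Since $a(\Delta)\Delta\neq1$, iterating the closed-form one-step map gives
$$
Y^{s}_{n}=Y^{s}_{0}\prod_{j=0}^{n-1}R_{j},
\qquad
R_{j}:=1+\frac{\mu\Delta+\lambda\sqrt{\Delta}\,\xi^{1}_{j}}{1-a(\Delta)\Delta},
$$
where the $R_{j}$ are i.i.d. and take only the two values $r_{\pm}:=1+(\mu\Delta\pm\lambda\sqrt{\Delta})/(1-a(\Delta)\Delta)$, each with probability $1/2$. Property P1 is exactly the requirement that every factor be positive, i.e. $r_{+}>0$ and $r_{-}>0$, which (as already recorded in the excerpt) is equivalent to $a(\Delta)\in\left]-\infty,p_{1}\right[\cup\left]p_{2},+\infty\right[$; I take this for granted and work inside that region, where $\log R_{j}$ is well defined.

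Taking logarithms, $\log|Y^{s}_{n}|=\log|Y^{s}_{0}|+\sum_{j=0}^{n-1}\log R_{j}$, and the increments $\log R_{j}$ are i.i.d. and bounded. By the strong law of large numbers, $\tfrac{1}{n}\sum_{j=0}^{n-1}\log R_{j}\to L$ a.s., where
$$
L:=\mathbb{E}\log R_{0}=\tfrac12\log(r_{+}r_{-})
=\tfrac12\log\!\left(\frac{(1-a(\Delta)\Delta+\mu\Delta)^{2}-\lambda^{2}\Delta}{(1-a(\Delta)\Delta)^{2}}\right).
$$
Hence $Y^{s}_{n}\to0$ a.s. whenever $L<0$, and $|Y^{s}_{n}|\to\infty$ whenever $L>0$. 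The borderline $L=0$, which can occur only at $a(\Delta)=p_{3}$, needs a separate argument: at such a point inside the P1 region one has $\lambda\neq0$, so the values $r_{\pm}$ are distinct and $S_{n}:=\sum_{j<n}\log R_{j}$ is a nondegenerate mean-zero random walk; by Chung--Fuchs recurrence $\limsup_{n}S_{n}=+\infty$ and $\liminf_{n}S_{n}=-\infty$ a.s., so $Y^{s}_{n}$ does not converge to $0$. Consequently, within the P1 region, P1 together with $Y^{s}_{n}\to0$ a.s. holds if and only if $L<0$.

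It remains to translate $L<0$ into a condition on $a(\Delta)$ and to intersect it with the P1 region. The inequality $L<0$ reads $(1-a(\Delta)\Delta+\mu\Delta)^{2}-\lambda^{2}\Delta<(1-a(\Delta)\Delta)^{2}$, which after expanding and dividing by $\Delta>0$ collapses to the \emph{linear} inequality $2\mu\,(1-a(\Delta)\Delta)<\lambda^{2}-\mu^{2}\Delta$. Solving for $a(\Delta)$ gives, with $p_{3}=(\mu^{2}\Delta+2\mu-\lambda^{2})/(2\mu\Delta)$: for $\mu>0$, $L<0\iff a(\Delta)>p_{3}$; for $\mu<0$, $L<0\iff a(\Delta)<p_{3}$; and for $\mu=0$ with $\lambda\neq0$ the inequality becomes $0<\lambda^{2}$, so $L<0$ holds automatically. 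Intersecting each of these with the P1 set $\left]-\infty,p_{1}\right[\cup\left]p_{2},+\infty\right[$ yields precisely the three cases in the statement.

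The main obstacle will be twofold. First, the characterization ``$Y^{s}_{n}\to0\iff L<0$'' is not a pure consequence of the SLLN: the equality case $L=0$ must be excluded by a random-walk recurrence argument, which is exactly where the non-degeneracy $\lambda\neq0$ (forced at the relevant boundary by $2\mu-\lambda^{2}<0$) enters. Second, reducing the final answer to exactly the stated unions of intervals requires checking the relative ordering of $p_{1},p_{2},p_{3}$ in each sign regime of $\mu$ --- e.g. that $p_{2}<p_{3}$ when $\mu<0$ and $p_{3}<p_{1}$ when $\mu>0$ --- so that each intersection collapses to a single clean description; this is routine but must be carried out case by case using the standing assumption $2\mu-\lambda^{2}<0$.
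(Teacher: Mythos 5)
Your proposal follows essentially the same route as the paper's proof: the paper likewise reduces the almost sure convergence $Y^{s}_{n}\rightarrow 0$ to the sign condition $\mathbb{E}\log\left(1+\left(\mu\Delta+\lambda\sqrt{\Delta}\xi^{1}_{n}\right)/\left(1-a\left(\Delta\right)\Delta\right)\right)<0$ (it cites Lemma 5.1 of \cite{Higham2000}, which packages the SLLN/LIL argument you reconstruct by hand; your Chung--Fuchs treatment of the borderline $L=0$ is a legitimate substitute for the law of the iterated logarithm used there), evaluates the expectation in the same closed form $\tfrac12\log\left(\left(1+\mu\Delta/\left(1-a\Delta\right)\right)^{2}-\lambda^{2}\Delta/\left(1-a\Delta\right)^{2}\right)$, arrives at the same linear inequality $2\mu\left(1-a\left(\Delta\right)\Delta\right)+\mu^{2}\Delta-\lambda^{2}<0$, and finishes by intersecting with the P1 region. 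One small imprecision: for $\mu<0$ the standing hypothesis $2\mu-\lambda^{2}<0$ does \emph{not} force $\lambda\neq0$ at the boundary; there one checks instead that if $\lambda=0$ and $a\left(\Delta\right)=p_{3}$ then $1-a\left(\Delta\right)\Delta=-\mu\Delta/2$, so the multiplier equals $-1$ and the point violates P1, making the borderline case vacuous inside the P1 region --- the conclusion you need, but by a different (one-line) reason than the one you give.

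The genuine misstep is in your closing paragraph: the orderings you single out as the ones to verify, namely $p_{2}<p_{3}$ for $\mu<0$ and $p_{3}<p_{1}$ for $\mu>0$, are precisely the ones that are \emph{not} needed --- they would only guarantee non-emptiness of $\left]p_{2},p_{3}\right[$ and $\left]p_{3},p_{1}\right[$, and the lemma is stated with these possibly empty intervals --- and, worse, they are false in general: taking $\mu=-1$, $\lambda=1/2$, $\Delta=1$ (so $2\mu-\lambda^{2}<0$) gives $p_{2}=1>5/8=p_{3}$, while $\mu=1$, $\lambda=2$, $\Delta=10$ gives $p_{3}=2/5>1/10=p_{1}$; had you attempted the ``routine'' verification as stated, it would have failed. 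What the set identity actually requires is $p_{1}<p_{3}$ when $\mu<0$ (so that $\min\{p_{1},p_{3}\}=p_{1}$) and $p_{2}>p_{3}$ when $\mu>0$ (so that $\max\{p_{2},p_{3}\}=p_{2}$), which is exactly what the paper invokes at the end of its proof. Both follow in one line: for $\mu<0$ one has $p_{1}=\left(1-\left\vert\lambda\right\vert\sqrt{\Delta}+\mu\Delta\right)/\Delta$ and $2\mu\Delta\left(p_{3}-p_{1}\right)=-\left(\left\vert\lambda\right\vert-\mu\sqrt{\Delta}\right)^{2}<0$, whence $p_{3}>p_{1}$; for $\mu>0$ one has $p_{2}=\left(1+\left\vert\lambda\right\vert\sqrt{\Delta}+\mu\Delta\right)/\Delta$ and $2\mu\Delta\left(p_{2}-p_{3}\right)=\left(\left\vert\lambda\right\vert+\mu\sqrt{\Delta}\right)^{2}>0$, whence $p_{2}>p_{3}$. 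With this correction your argument closes and coincides with the paper's.
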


Using Lemma  \ref{lem:EstLinear} we deduce that we can choose
 \begin{equation}
\label{eq:2.7}
 a \left( \Delta \right)
=
\begin{cases}
 \mu - \alpha_1  \left( \Delta \right) \lambda^{2}
,
 & \text{if } \mu \leq 0
 \\
   \mu -  \alpha_2  \left( \Delta \right) \lambda^{2}
,
 & \text{if }  \mu > 0  \text{ and }   \Delta < 2/ \mu
  \\
  \left( 1+\left\vert \lambda \right\vert \sqrt{\Delta}+ \mu \Delta  \right)  / \Delta + \beta
   ,
 & \text{if }  \mu > 0 \text{ and } \Delta  \geq  2/ \mu
\end{cases}
\end{equation}
where $\beta >0$,
$
1/4
 <  \alpha_2  \left( \Delta \right)
\leq 1/4 +  \left(  \lambda^2  - 2 \mu \right) \left(  2  -  \mu \Delta \right)  / \left( 8 \lambda^2 \right)
$
and
$\alpha_1$ is a bounded function satisfying 
$\alpha_1  \left( \Delta \right) > 1/4$.

\begin{theorem}
\label{th:EstLinear}
Let $2\mu-\lambda^{2} < 0$.
Then,
$Y^{s}_{n}$ with $a \left( \Delta \right)$ given by (\ref{eq:2.7})
satisfies Properties P1 and P2.
\end{theorem}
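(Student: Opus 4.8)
The plan is to prove Theorem~\ref{th:EstLinear} by pure verification: I show that, in each of the three branches of (\ref{eq:2.7}), the number $a(\Delta)$ falls inside the admissible set dictated by Lemma~\ref{lem:EstLinear}. Since the standing assumption is $2\mu-\lambda^2<0$, Property P2 is exactly the statement $\lim_{n\to\infty}Y^s_n=0$ a.s., so Lemma~\ref{lem:EstLinear} yields both P1 and P2 at once. The only hypothesis of the lemma to be checked beforehand, $a(\Delta)\Delta\neq1$, comes for free, because the first two branches will be shown to satisfy $a(\Delta)<p_1\leq 1/\Delta$ and the third $a(\Delta)>p_2\geq 1/\Delta$.

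For the branch $\mu\leq0$ I first note that $\mu\Delta\leq|\lambda|\sqrt{\Delta}$, so $1-|\lambda|\sqrt{\Delta}+\mu\Delta\leq1$ and hence $p_1=(1-|\lambda|\sqrt{\Delta}+\mu\Delta)/\Delta$. Cancelling the common $\mu$, the inequality $a(\Delta)<p_1$ becomes, after the substitution $x=1/\sqrt{\Delta}$, the quadratic inequality $x^2-|\lambda|x+\alpha_1\lambda^2>0$; its discriminant $\lambda^2(1-4\alpha_1)$ is negative because $\alpha_1>1/4$ (and $\lambda\neq0$ when $\mu=0$, by $2\mu-\lambda^2<0$), so the inequality holds for every $\Delta>0$. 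Thus $a(\Delta)\in\left]-\infty,p_1\right[$, which lies in the admissible set of Lemma~\ref{lem:EstLinear} both when $\mu<0$ and when $\mu=0,\lambda\neq0$.

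For the branch $\mu>0$, $\Delta<2/\mu$ I combine $\mu\Delta<2$ with $\lambda^2>2\mu$ to get $\mu^2\Delta<2\mu<\lambda^2$, which again gives $p_1=(1-|\lambda|\sqrt{\Delta}+\mu\Delta)/\Delta$, so the same quadratic argument (now with $\alpha_2>1/4$) yields $a(\Delta)<p_1$. The genuinely new requirement is the lower bound $a(\Delta)>p_3$: multiplying by $2\mu\Delta>0$ turns it into $\alpha_2<(\mu^2\Delta-2\mu+\lambda^2)/(2\mu\Delta\lambda^2)=:T$. Denoting by $U$ the upper bound imposed on $\alpha_2$ in (\ref{eq:2.7}), I would place $T$ and $U$ over the common denominator $8\mu\Delta\lambda^2$ and check that the numerator of $T-U$ collapses to $(2-\mu\Delta)^2(\lambda^2-2\mu)$, which is strictly positive since $2-\mu\Delta>0$ and $\lambda^2-2\mu>0$. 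Hence $\alpha_2\leq U<T$, so $a(\Delta)>p_3$ and $a(\Delta)\in\left]p_3,p_1\right[$; the same computation also gives $U>1/4$, confirming that the range prescribed for $\alpha_2$ is nonempty. This algebraic collapse is the one delicate step and the main obstacle of the proof.

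Finally, for the branch $\mu>0$, $\Delta\geq2/\mu$, positivity of $\mu$ gives $p_2=(1+|\lambda|\sqrt{\Delta}+\mu\Delta)/\Delta$, and since $a(\Delta)=p_2+\beta$ with $\beta>0$ we immediately obtain $a(\Delta)\in\left]p_2,+\infty\right[$. Assembling the three branches, $a(\Delta)$ always lies in the admissible set of Lemma~\ref{lem:EstLinear}, which delivers Properties P1 and P2 and completes the proof.
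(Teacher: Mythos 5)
Your proposal is correct and follows essentially the same route as the paper: a branch-by-branch verification that $a(\Delta)$ lands in the admissible set of Lemma~\ref{lem:EstLinear}, where your quadratic-in-$x=1/\sqrt{\Delta}$ discriminant argument is just a repackaging of the paper's inequality (\ref{eq:2.5}) (the minimum of $x^{2}-\left\vert\lambda\right\vert x+\mu$ being $\mu-\lambda^{2}/4$), and your common-denominator identity $T-U=(2-\mu\Delta)^{2}(\lambda^{2}-2\mu)/(8\mu\Delta\lambda^{2})$ is the same algebraic fact the paper extracts via (\ref{eq:2.8}). The only cosmetic slip is the degenerate subcase $\mu<0$, $\lambda=0$, where the discriminant is zero rather than negative, but the quadratic $x^{2}>0$ still holds at $x=1/\sqrt{\Delta}>0$, so nothing breaks; you also make explicit the check $a(\Delta)\Delta\neq1$, which the paper leaves implicit.
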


\begin{figure}[tb]
  \begin{center}
      \includegraphics[height= 3.2in,width=5.2in]{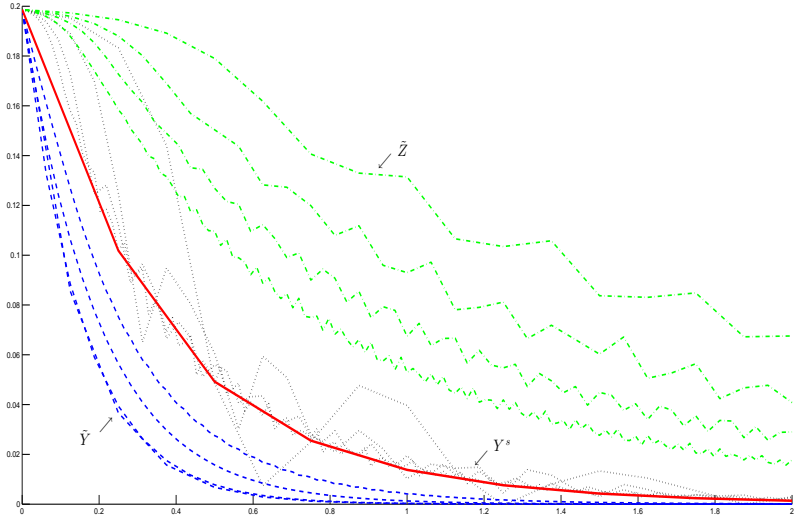}
    \caption{ Computation of  $ \mathbb{E} \sin\left( X_t  / 5 \right) $,
where $t\in\left[0, 2\right]$ and $X_t$ solves (\ref{eq:2.1}) with
$\mu = 0$, $\lambda  = 4$ and $X_0 = 1$. 
Dashed  line: $\tilde{Y}$, dashdot line: $\tilde{Z}$, dotted line: $Y^{s}$, and solid line: reference values.
Here, $\Delta$ takes the values $1/8$, $1/16$, $1/32$ and $1/64$.
As we expected, smaller $\Delta$ produce better approximations.}
    \label{fig:1}
  \end{center}
\end{figure}

Following  \cite{Milstein1998},
we now illustrate the behavior of $Y^{s}_{n}$ using  (\ref{eq:2.1}) with $\mu = 0$ and $\lambda  = 4$.
We take $X_0 = 1$.
Since $\mu \leq 0 $, 
we choose  $\alpha_1  \left( \Delta \right) = 1/4 + 1/100$;
its convenient to keep  the weights as small as possible.
Figure \ref{fig:1} displays the computation of
$\mathbb{E} \sin\left( X_t  / 5 \right)$ obtained from the sample means of $25 \cdot 10^9$ observations of:
 $Y^{s}_{n}$ with $a \left( \Delta \right) = - 0.26 \ \lambda^2$, 
 the fully implicit method 
$
 \tilde{Y}_{n+1}
 =
 \tilde{Y}_{n} /
 \left(
 1 + \lambda^2 \Delta - \lambda \sqrt{\Delta}  \xi^1_{n}
 \right)
$
(see p.  497 of  \cite{Kloeden1992}),
and the balanced scheme 
\[
\widetilde{Z}_{n+1}
=
\widetilde{Z}_{n}\left(  1+\lambda\sqrt{\Delta} \xi^{1}_{n} +\lambda\sqrt{\Delta}\right)
/
\left(  1 + \lambda\sqrt{\Delta}\right),
\]
which is a weak version of the method developed in Section 2 of  \cite{Milstein1998}.
$\widetilde{Z}_{n}$ preserves  the sign of $\widetilde{Z}_{0}$ and is almost sure asymptotically stable (see
\cite{Schurz2012}).
Solid line identifies the `true' values gotten by sampling $25 \cdot 10^9$ times
$
\exp\left( - 8 t   + 4 W_{t}\right)
$.

In contrast with the poor performance of the Euler-Maruyama scheme
when
the step sizes are greater than or equal to $1/16$,
Figure \ref{fig:1}
suggests us that
$Y^{s}_{n}$ is an efficient scheme having good qualitative and convergence properties.
In this numerical experiment,
the accuracy of $\tilde{Z}_{n}$ is not good,
and 
$\tilde{Y}_{n}$ decays  too fast to $0$ as $n \rightarrow \infty$.

\section{System of bilinear SDEs}
\label{sec:system}

This section is devoted to the  SDE
\begin{equation}
 \label{eq:4.1}
 X_{t}=X_{0}+\int_{0}^{t}BX_{s}ds+\sum_{k=1}^{m}\int_{0}^{t}\sigma^{k} X_{s}dW_{s}^{k},
\end{equation}
where $X_t \in \mathbb{R}^d$ and  $B,\sigma^{k} \in  \mathbb{R}^{d \times d}$.
The bilinear SDEs describe dynamical features of non-linear SDEs via the linearization around their equilibrium points (see, e.g., \cite{Baxendale1994}).
The system of SDEs (\ref{eq:4.1}) also appears, for example, 
in the spatial discretization of stochastic partial differential equations (see, e.g., \cite{Gyongy2002,Jentzen2011}).

\subsection{Heuristic balanced scheme}

Since (\ref{eq:4.1}) is bilinear,
we restrict $c^{0}$ to be constant,
and so  (\ref{eq:1.3}) becomes
\begin{equation}
\label{eq:4.2}
 Z_{n+1} 
= 
Z_{n} + B Z_{n} \Delta + H  \left( \Delta \right) \left( Z_{n+1} - Z_{n} \right)  \Delta 
+ 
\sum_{k=1}^{m} \sigma^{k} Z_{n} \sqrt{\Delta}  \xi^k_{n} ,
\end{equation}
with $H : \left] 0 , \infty \right[ \rightarrow \mathbb{R}^{d\times d}$ and $\Delta > 0$.
The rate of weak convergence of $Z_n$ is equal to $1$ provided, for instance, that
$ H \left( \Delta \right)$ and $\left( I - \Delta H \left( \Delta \right) \right)^{-1}$ are bounded on any  interval 
$\Delta \in \left] 0, a \right]$
(see, e.g., \cite{Schurz2005}).
Generalizing roughly Section \ref{sec:LinearScalarSDEs} we choose 
$
H  \left( \Delta \right) =B-  \sum\limits_{k=1}^{m} \alpha_{k} \left( \Delta \right) \left(  \sigma^{k}\right)  ^{\top} \sigma^{k},
$
where, for example, $\alpha_{k} \left( \Delta \right) =0.26$.
This gives the recursive scheme
\begin{equation}
\label{eq:4.3}
\begin{split}
\left( I - \Delta B +  0.26 \ \Delta \sum\limits_{k=1}^{m}  \left(  \sigma^{k}\right)  ^{\top} \sigma^{k} \right)
Y^{s}_{n+1} 
& = 
Y^{s}_{n} 
+ 0.26 \ \Delta \sum\limits_{k=1}^{m}  \left(  \sigma^{k}\right)  ^{\top} \sigma^{k}  Y^s_n
\\
& \quad
+ 
\sum_{k=1}^{m} \sigma^{k} Y^{s}_{n} \sqrt{\Delta}  \xi^k_{n} ,
\end{split}
\end{equation}
which is a first-order weak balanced version of the semi-implicit Euler method. 

\begin{remark}
Combining (\ref{eq:4.3}) with ideas of the local linearization method
(see, e.g., \cite{Biscay1996,DeLaCruz2010})
we deduce the following numerical method for (\ref{eq:1.1}):
\begin{equation*}
\label{eq:heuristic_nonlinear}
U_{n+1}=U_n+b\left(U_n\right)\Delta+\sum_{k=1}^{m}\sigma^k\left(U_n\right)\sqrt{\Delta}  \xi^k_{n}+ H  \left( \Delta, U_n \right) \left( U_{n+1} - U_{n} \right)
\Delta,
\end{equation*}
where
$
H  \left( \Delta,x\right) =\nabla b\left(x\right)-
\sum\limits_{k=1}^{m} \alpha_{k} \left( \Delta \right) \left(\nabla
\sigma^{k}\left(x\right)\right)  ^{\top}\nabla \sigma^{k}\left(x\right)
$
with $\alpha_{k} \left( \Delta \right) =0.26$.
\end{remark}

\subsection{Optimal criterion  to select $c_0$}

In case $I  -  \Delta H  \left( \Delta \right)$ is invertible,
according to (\ref{eq:4.2}) we have
\begin{equation}
 \label{eq:4.5}
 Z_{n+1} 
= 
Z_n + \left( I  -  \Delta H \left( \Delta \right) \right)^{-1} \left(   \Delta B   + \sum_{k=1}^{m}  \sqrt{\Delta}  \xi^k_{n} \sigma^{k} \right) Z_{n},
\end{equation}
where $I$ is the identity matrix.
Therefore,
a more general formulation of $Z_n$ is 
\begin{equation}
 \label{eq:4.6}
V_{n+1} 
= 
V_n + \left( I  +  \Delta M \left( \Delta \right) \right) \left(  \Delta B   + \sum_{k=1}^{m}  \sqrt{\Delta}  \xi^k_{n} \sigma^{k} \right) V_n,
\end{equation}
with  $M : \left] 0 , \infty \right[ \rightarrow \mathbb{R}^{d\times d}$.
In fact, taking 
$
 M \left( \Delta \right)
 =
 \left( \left( I  -  \Delta H \left( \Delta \right) \right)^{-1} - I \right)/\Delta
$
we obtain (\ref{eq:4.5}) from (\ref{eq:4.6}).
The following theorem provides a useful estimate of 
the growth rate of $V_{n}$
in terms of $\mathbb{E} \log \left( \left\Vert A_0  \left( \Delta, M  \left( \Delta \right) \right) x \right\Vert \right)$,
a quantity that we can compute explicitly  in each specific situation.

\begin{theorem}
\label{th:EstSistema}
Let $V_n$ be defined recursively by (\ref{eq:4.6}).
Then 
\begin{equation}
\label{eq:3.3}
\lim_{n\rightarrow \infty}\frac{1}{n\Delta}\log \left( \left\Vert V_n \right\Vert \right)
\leq 
\frac{1}{\Delta} \sup_{x\in\mathbb{R}^d,\left\Vert x \right\Vert=1}\mathbb{E}
\log \left( \left\Vert A_n \left(  \Delta, M  \left( \Delta \right) \right) x \right\Vert \right),
\end{equation}
where
$A_n  \left(  \Delta, M  \right) = 
I + \left( I  +  \Delta M \right) \left(  \Delta B   + \sum_{k=1}^{m}  \sqrt{\Delta}  \xi^k_{n} \sigma^{k} \right) $.
\end{theorem}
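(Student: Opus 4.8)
The plan is to read (\ref{eq:4.6}) as a product of i.i.d.\ random matrices and to bound its top growth rate by a single-step conditioning argument. First I would observe that (\ref{eq:4.6}) is exactly $V_{n+1}=A_n\left(\Delta,M\left(\Delta\right)\right)V_n$, whence $V_n=A_{n-1}\cdots A_0\,V_0$. Because the vectors $\left(\xi_n^1,\ldots,\xi_n^m\right)$ are independent and take values in $\left\{-1,1\right\}^m$, the matrices $A_0,A_1,\ldots$ are i.i.d.\ and assume at most $2^m$ deterministic values; in particular $\left\Vert A_n\right\Vert\le C$ a.s.\ for some constant $C$. Assuming $V_0\neq0$ (if $V_j=0$ for some $j$ then $V_n\equiv0$ for $n\ge j$, the left-hand side of (\ref{eq:3.3}) is $-\infty$, and there is nothing to prove), I set $\widehat V_j=V_j/\left\Vert V_j\right\Vert$ and telescope
\[
\log\left(\left\Vert V_n\right\Vert\right)=\log\left(\left\Vert V_0\right\Vert\right)+\sum_{j=0}^{n-1}\log\left(\left\Vert A_j\widehat V_j\right\Vert\right).
\]

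Next I would exploit independence. With $\mathcal F_j=\sigma\left(\xi_i^k:i<j\right)$, the unit vector $\widehat V_j$ is $\mathcal F_j$-measurable while $A_j$ is independent of $\mathcal F_j$ and distributed as $A_0$; hence
\[
\mathbb{E}\left[\log\left(\left\Vert A_j\widehat V_j\right\Vert\right)\mid\mathcal F_j\right]=\psi\left(\widehat V_j\right)\le\gamma,\qquad\psi\left(x\right):=\mathbb{E}\log\left(\left\Vert A_0\,x\right\Vert\right),\ \ \gamma:=\sup_{\left\Vert x\right\Vert=1}\psi\left(x\right),
\]
where $\gamma$ equals $\Delta$ times the right-hand side of (\ref{eq:3.3}). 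The Doob decomposition $\sum_{j<n}\log\left(\left\Vert A_j\widehat V_j\right\Vert\right)=\sum_{j<n}\psi\left(\widehat V_j\right)+\mathcal M_n$ then has compensator bounded above by $n\gamma$ and a martingale remainder $\mathcal M_n$.

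It remains to show $\mathcal M_n/n\to0$ a.s. The increments are bounded above by $\log C-\psi\left(\widehat V_j\right)$; when each of the finitely many values $A\left(\varepsilon\right)$ is invertible they are also bounded below by $\log\left(\min_\varepsilon\min_{\left\Vert x\right\Vert=1}\left\Vert A\left(\varepsilon\right)x\right\Vert\right)>-\infty$, so the martingale has bounded increments and the strong law for martingales gives $\mathcal M_n/n\to0$. Combining the three displays yields $\limsup_{n}\frac1n\log\left(\left\Vert V_n\right\Vert\right)\le\gamma$, and dividing by $\Delta$ gives (\ref{eq:3.3}).

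The main obstacle is exactly this last step, i.e.\ controlling the lower tail of $\log\left(\left\Vert A_j\widehat V_j\right\Vert\right)$ when some $A\left(\varepsilon\right)$ is singular, for then the increments need not be bounded below. I would circumvent this by truncating: replace $\log\left(\left\Vert A_j\widehat V_j\right\Vert\right)$ by its maximum with a fixed level $-L$, which only enlarges the sum and so preserves the upper bound, apply the bounded-increment strong law to the truncated martingale to obtain $\limsup_n\frac1n\log\left(\left\Vert V_n\right\Vert\right)\le\gamma_L:=\sup_{\left\Vert x\right\Vert=1}\mathbb{E}\max\left(\log\left(\left\Vert A_0x\right\Vert\right),-L\right)$, and finally let $L\to\infty$. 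Using continuity of the truncated integrands, compactness of the unit sphere and monotone convergence one checks $\gamma_L\downarrow\gamma$, which completes the argument.
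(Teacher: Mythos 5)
Your proof is correct, but it takes a genuinely different route from the paper's. The paper disposes of the theorem in a few lines: after writing $V_n = A_{n-1}\left(\Delta, M\left(\Delta\right)\right)\cdots A_0\left(\Delta, M\left(\Delta\right)\right)V_0$, it checks the integrability condition $\sup_{\left\Vert x\right\Vert=1}\mathbb{E}\log_{+}\left(\left\Vert A_0 x\right\Vert\right)<\infty$ (immediate, since the $\xi_n^k$ are bounded) and invokes Theorem 3.1 of Cohen and Newman \cite{Cohen1984} on products of i.i.d.\ random matrices, which yields that $\frac{1}{n}\log\left(\left\Vert V_n\right\Vert\right)$ converges a.s.\ and equals $\int_{\left\Vert x\right\Vert=1}\mathbb{E}\log\left(\left\Vert A_0 x\right\Vert\right)\mu\left(dx\right)$ for some probability measure $\mu$ on the unit sphere; bounding the integrand by its supremum gives (\ref{eq:3.3}). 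You replace this citation by an elementary, self-contained scheme: telescoping along the projective chain $\widehat V_j$, a Doob decomposition with compensator at most $n\gamma$, a martingale strong law, and---the one genuine obstacle, which you correctly identify---a truncation at level $-L$ to handle increments unbounded below when some realization of $A_0$ is singular. Your closing claim $\gamma_L\downarrow\gamma$ is sound: each $\psi_L$ is continuous on the compact sphere (the expectation is a finite sum over at most $2^m$ atoms), and passing to a convergent subsequence of maximizers together with monotone convergence settles it; the degenerate event $V_j=0$ is also handled properly. As to what each approach buys: the paper's route is shorter and delivers strictly more, namely the a.s.\ \emph{existence} of the limit---note the theorem is stated with $\lim$, whereas your martingale argument only controls $\limsup$, so strictly speaking you prove the statement with $\limsup$ in place of $\lim$, which is entirely sufficient for the stability application but leaves the existence claim to be supplied separately. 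In exchange, your route needs no random-matrix ergodic theory, works verbatim for any i.i.d.\ bounded sequence $A_n$ without ever exhibiting an invariant measure, and makes transparent why the supremum over the sphere, rather than an integral against the unknown measure $\mu$, is the natural quantity on the right-hand side of (\ref{eq:3.3}).
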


Set 
$
\ell :=
 \sup_{x\in\mathbb{R}^d, \left\| x \right\| = 1} 
 \left(
 \langle x , B x \rangle
 +
\frac{1}{2}\sum_{k=1}^m \left\Vert \sigma^k x \right\Vert^2
-
 \sum_{k=1}^m  \langle x ,\sigma^k x \rangle^2
\right)
$.
Then 
\begin{equation}
\label{eq:4.4}
\limsup_{t\rightarrow \infty}\frac{1}{t}\log\left(\left\Vert X_t\right\Vert\right) \leq \ell 
\hspace{1cm}  a.s.
\end{equation}
(see, e.g., \cite{Higham2007}).
Fix $\Delta > 0$.
We would like that for all $ \left\| x \right\| = 1$,
$$
\frac{1}{\Delta} \mathbb{E} \log \left( \left\Vert A_0  \left(  \Delta, M  \left( \Delta \right) \right) x \right\Vert \right)
\approx
\langle x , B x \rangle
 +
\frac{1}{2}\sum_{k=1}^m \left\Vert \sigma^k x \right\Vert^2
-
 \sum_{k=1}^m  \langle x ,\sigma^k x \rangle^2  .
$$
A simpler problem is to find $M \left( \Delta \right)$ for which 
the upper bounds  (\ref{eq:3.3}) and (\ref{eq:4.4}) are as close as possible,
and so we can expect that 
$V_n$ inherits the long-time behavior of $X_t$.
Then, 
we propose to take 
\begin{equation}
 \label{eq:4.7}
 M  \left( \Delta \right) 
\in
\argmin 
\left\{
\left(
 \frac{1}{\Delta} \sup_{x\in\mathbb{R}^d,\left\Vert x \right\Vert=1}
 \mathbb{E} \log \left( \left\Vert A_0  \left(  \Delta, M \right) x \right\Vert \right)
 -
\ell
\right)^2
:
M \in  \mathcal{M}
\right\} ,
\end{equation}
where  $\mathcal{M}$ is a predefined subset $\mathbb{R}^{d\times d}$.
Two examples of $\mathcal{M}$ used successfully  in our numerical experiments are 
$\mathbb{R}^{d\times d}$
and 
$
\left\{ \left( M_{i,j} \right)_{1 \leq i,j \leq d }
: 
\left| M_{i,j} \right| \leq K  \text{ for all } i,j  \right\} 
$,
with $K$ large enough.
Applying the classical methodology introduced by Talay and Milshtein
for studying the weak convergence order 
(see, e.g., \cite{Graham2013,Milstein2004})
we can deduce that $V_n$ converges weakly with order  $1$ whenever 
$\Delta\rightarrow M\left(\Delta\right)$ is locally bounded.

\begin{remark}
Sometimes, the asymptotic behavior of (\ref{eq:1.1}) depends on the properties of the
SDE  obtained by linearizing (\ref{eq:1.1}) around $0$
(see, e.g., \cite{Baxendale1994}).
In these cases, we can extend to (\ref{eq:1.1})  the scheme given by (\ref{eq:4.6}) and (\ref{eq:4.7}) as 
 \begin{equation*}
V_{n+1}
=
V_n + \left( I  +  \Delta M \left( \Delta \right) \right) \left(
b\left( V_{n}\right) \Delta+
\sum_{k=1}^{m} \sigma^{k} \left(V_{n}\right) \sqrt{\Delta}  \xi^k_{n} \right)
\end{equation*}
where now $M \left( \Delta \right) $ is described by (\ref{eq:4.7}) with
$\mathcal{M}$ a predefined subset of $\mathbb{R}^{d\times d}$,
$$
\ell :=
\sup_{x\in\mathbb{R}^d, \left\| x \right\| = 1}
\left(
\langle x ,  \nabla b\left(0\right) x \rangle
+
\frac{1}{2}\sum_{k=1}^m \left\Vert \nabla \sigma^k\left(0\right) x
\right\Vert^2
-
\sum_{k=1}^m  \langle x ,\nabla \sigma^k\left(0\right) x \rangle^2
\right)
$$
and
$
A_0  \left(  \Delta, M  \right) =
I + \left( I  +  \Delta M \right) \left(   \nabla b\left(0\right)
\Delta  + \sum_{k=1}^{m}  \nabla \sigma^{k}\left(0\right)\sqrt{\Delta}
\xi^k_{0} \right)
$.
\end{remark}

\subsection{Numerical experiment}

\renewcommand{\arraystretch}{1.3}
\begin{table}[bt]
\begin{center} \footnotesize
\begin{tabular}{|c|cccccc|}
\hline
$\Delta$ & $1/2$ & $1/4$ & $1/8$ & $1/16$ & $1/32$ & $1/64$\\ 
\hline
$ M_{1,1} \left( \Delta \right)$ & $-1.6099$ & $-5.1036$ & $-4.8804$ & $-7.1499$ & $-1.6758$ & $0.9887$\\ 
\hline
$ M_{2,1} \left( \Delta \right)$ & $0.0975$ & $0.2758$ & $0.7667$ & $1.0136$ & $1.1500$ & $0.9918$\\ 
\hline
$ M_{1,2} \left( \Delta \right)$ & $-0.0975$ & $-0.2752$ & $-0.8505$ & $-0.1814$ & $-1.0448$ & $-1.9947$\\ 
\hline
$ M_{2,2} \left( \Delta \right)$ & $-1.3173$ & $-5.9305$ & $-2.6136$ & $-2.3003$ & $-1.7421$ & $-1.9005$\\ 
\hline
Order & $ -10$ & $-19$ & $-21$ & $-21$ & $-20$ & $-19$\\ 
\hline
\end{tabular}
\caption{Approximate values of the weight matrix $\left( M_{i,j} \left(\Delta\right) \right)_{1 \leq i,j \leq 2 }$ for (\ref{EDE}) with $\sigma_1 = 7$, $\sigma_2 = 4$ and $\epsilon = 1$,
together with the corresponding order of magnitude of the objective function minimum.
}

\label{Tabla2}
\end{center}
\end{table}

We consider the non-commutative test equation
\begin{equation}
\label{EDE}
dX_{t} 
=
\begin{pmatrix}
 \sigma_1  & 0 \\
0 & \sigma_2 %
\end{pmatrix}
 X_{t} \, dW_{t}^{1} 
+
\begin{pmatrix}
0 & -\epsilon  \\
\epsilon  & 0%
\end{pmatrix}%
X_{t} \, dW_{t}^{2},  
\end{equation}%
where  $\sigma_1=7$, $\sigma_2=4$, $\epsilon=1$ and $X_0=\left(1,2\right)^{\top}$.
Since $0 < \sigma_2 < \sigma_1 < 3 \sigma_2$,
applying elementary calculus we get
 $\ell =\left(\epsilon ^2 - \sigma_2 ^2\right)/2 < 0$,
and so $X_t$ converges exponentially fast to $0$.
To illustrate the performance of schemes of type (\ref{eq:1.3}),
we take $V_n$ defined by (\ref{eq:4.6}) and (\ref{eq:4.7})
with  
$
\mathcal{M} =  
\left\{ \left( M_{i,j} \right)_{1 \leq i,j \leq 2 }
: 
\left| M_{i,j} \right| \leq 20  \text{ for all } i,j  \right\} 
$.
Table \ref{Tabla2} provides four-decimal approximations of the components of $M  \left( \Delta \right)$,
which have been obtained by running ($5^4$-times) the MATLAB function \texttt{fmincon} for the initial
parameters 
$
\left\{ \left( M_{i,j} \right)_{1 \leq i,j \leq 2 }
: 
M_{i,j} \in \left\{ -2, -1, 0, 1 , 2 \right\}  \text{ for all } i,j \right\} 
$.

 \begin{figure}[tb]
  \begin{center}
       \includegraphics[height= 3.2in,width=5.2in]{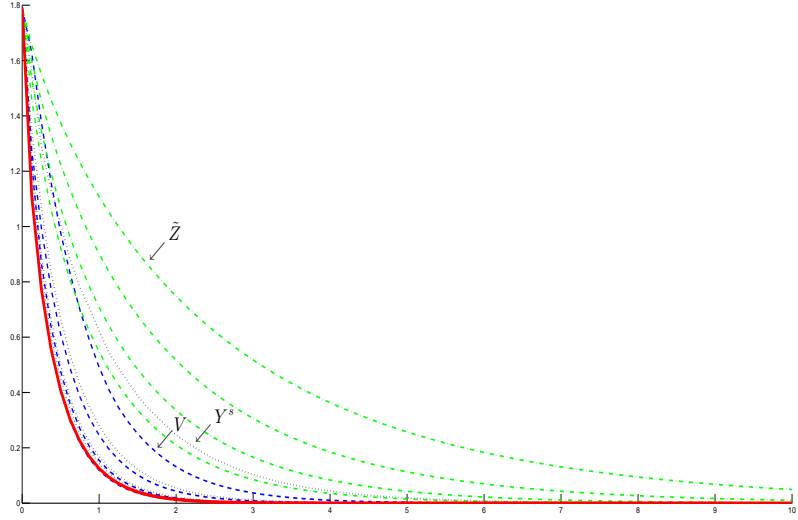}
    \caption{ Computation of  $\mathbb{E} \log\left(1+ \left\Vert X_t\right\Vert^2  \right) $,
where $t\in\left[0, 10\right]$ and $X_t$ solves (\ref{EDE}).
Dashed  line: $V_n$, dashdot line: $\tilde{Z}$, dotted line: $Y^{s}$, and solid line: reference values.
Here, $\Delta$ is equal to $1/8$, $1/16$, $1/32$ and $1/64$;
smaller discretization steps produce better approximations.
}
    \label{fig:2}
  \end{center}
\end{figure}

 Figure \ref{fig:2} shows the computation $\mathbb{E} \log\left(1+ \left\Vert X_t\right\Vert^2  \right)$  
 by means of $V_n$ (dashed line), $Y^s_n$ (dotted line), and the weak  balanced scheme (dashdot line)
\begin{align*}
\widetilde{Z}_{n+1}
& =  \widetilde{Z}_n+
\begin{pmatrix}
\sigma_1  & 0 \\
0 & \sigma_2 %
\end{pmatrix}%
\widetilde{Z}_n\sqrt{\Delta} \xi_n^1+
\begin{pmatrix}
0 & -\epsilon  \\
\epsilon  & 0%
\end{pmatrix}%
\widetilde{Z}_n\sqrt{\Delta}\xi_n^2
\\
& \quad
+
\sqrt{\Delta} 
\begin{pmatrix}
\left| \sigma_1 \right| + \left| \epsilon \right| & 0 \\
0 & \left| \sigma_2 \right| + \left| \epsilon \right|%
\end{pmatrix}%
\left( \widetilde{Z}_n -  \widetilde{Z}_{n+1} \right) 
\end{align*}
(see \cite{Schurz2005}).
The reference values for $\mathbb{E} \log\left(1+ \left\Vert X_t\right\Vert^2  \right)$  (solid line) 
have been calculated by using the weak Euler method 
 $$
\widetilde{Y}_{n+1}=\widetilde{Y}_n+
\begin{pmatrix}
\sigma_1  & 0 \\
0 & \sigma_2 %
\end{pmatrix}%
\widetilde{Y}_n\sqrt{\Delta} \, \xi_n^1+
\begin{pmatrix}
0 & -\epsilon  \\
\epsilon  & 0%
\end{pmatrix}%
\widetilde{Y}_n\sqrt{\Delta} \, \xi_n^2
$$
with step-size $\Delta=2^{-13}\approx0.000122$.
Indeed,
we plot the sample means obtained from $10^{8}$ trajectories of each scheme.
Furthermore, Table \ref{TablaBilineal} provides estimates of the errors  
$
\epsilon \left( \hat{Y} \right)
: =
\left| 
\mathbb{E} \log\left(1+ \left\Vert X_T \right\Vert^2  \right) 
-
\mathbb{E} \log\left(1+ \left\Vert \hat{Y}_{N} \right\Vert^2  \right)
\right| 
$,
where $T=1,3$, $N = T / \Delta$,
and $\hat{Y}_n$ represents  the numerical methods
$V_n$, $Y^s_n$, $\widetilde{Y}_n$ and $\widetilde{Z}_n$.
From Table \ref{TablaBilineal} we can see that  $\widetilde{Y}_n$ blows up for $\Delta \leq 1/16$. 
Figure  \ref{fig:2}, together with Table \ref{TablaBilineal}, illustrate that $\widetilde{Z}_{n}$ is stable, 
but presents a slow rate of weak convergence.
In contrast,  
the performance of $V_n$ is very good,
$V_n$ mix good stability properties with  reliable approximations. 
The heuristic balanced scheme $Y^s_n$ shows a very good behavior.
In fact, the accuracy of $Y^s_n$  is very similar to that of $V_n$ for $\Delta \leq 1/16$,
and  $Y^s_n$ does not involve any optimization process.

\renewcommand{\arraystretch}{1.3}
\begin{table}[bt]

\begin{center} \footnotesize

\begin{tabular}{cc|c|c|c|c|c|c|}
\cline{3-8}
& & \multicolumn{6}{ c| }{$ \Delta $} \\
&   & 1/2 & 1/4 & 1/8 & 1/16& 1/32 & 1/64  \\ 
\cline{1-8}
\multicolumn{1}{ |c }{\multirow{2}{*}{ $ \epsilon \left( \tilde{Y} \right) $} } &
\multicolumn{1}{ c| }{ $ T = 1$ }   & 6.5497  & 9.4879  & 12.733 & 11.0676  & 0.15183  & 0.02365       \\ 
\multicolumn{1}{ |c  }{}                        &
\multicolumn{1}{ c| }{ $ T = 3$ }   & 18.814  & 28.8744  & 38.9743 & 34.1327  & 0.0086188  & 0.00075718   \\  \hline 
\multicolumn{1}{ |c  }{\multirow{2}{*}{ $ \epsilon \left( \tilde{Z} \right) $ } } &
\multicolumn{1}{ c| }{ $ T = 1$ }  & 1.3395  & 1.1777  & 0.98272 & 0.7757  & 0.58279  & 0.42137   \\ 
\multicolumn{1}{ |c  }{}                        &
\multicolumn{1}{ c| }{ $ T = 3$ }  & 1.0611  & 0.78255  & 0.51624 & 0.30475  & 0.1643  & 0.08361   \\  \hline 
\multicolumn{1}{ |c  }{\multirow{2}{*}{ $ \epsilon \left( Y^s \right) $ } } &
\multicolumn{1}{ c| }{ $ T = 1$ }  & 1.1914  & 0.85936  & 0.49789 & 0.15466  & 0.042484  & 0.018271  \\ 
\multicolumn{1}{ |c  }{}                        &
\multicolumn{1}{ c| }{ $ T = 3$ } & 0.81853  & 0.38585  & 0.10185 & 0.0096884  & 0.0013717  & 0.00055511    \\  \hline 
\multicolumn{1}{ |c  }{\multirow{2}{*}{ $ \epsilon \left( V \right) $ } } &
\multicolumn{1}{ c| }{ $ T = 1$ }  & 1.2544  & 0.8482  & 0.36579 & 0.11998  & 0.029324  & 0.0069274  \\ 
\multicolumn{1}{ |c  }{}                        &
\multicolumn{1}{ c| }{ $ T = 3$ }  & 0.64867  & 0.16695  & 0.035366 & 0.0065051  & 0.00068084  & 0.00031002  \\  \hline
\end{tabular}
\end{center}

\caption{Estimation of errors involved in the computation of 
 $\mathbb{E} \log\left(1+ \left\Vert X_T\right\Vert^2  \right)$  for $T=1$ and $T=3$.
Here, $X_t$ verifies (\ref{EDE}) with $\sigma_1=7$, $\sigma_2=4$, $\epsilon =1$ and $X_0 =\left(1,2\right)^{\top}$.
\label{TablaBilineal}
}
\end{table}

\section{Proofs}
\label{sec:Proofs}

\begin{proof}[Proof of Lemma \ref{lem:EstLinear}]

We first prove that 
under Property P1,
 $
\lim_{n \rightarrow \infty} Y^{s}_{n}  = 0
$
$a.s.$
iff
\begin{equation}
 \label{eq:2.2}
\begin{cases}
 a \left( \Delta \right)
<
\left( \mu^{2}\Delta+2\mu-\lambda^{2} \right) / \left(  2\mu\Delta \right)
,
 & \text{if } \mu<0
 \\
   a \left( \Delta \right) \in\mathbb{R},
 & \text{if } \mu=0 \text{ and } \lambda \neq 0
  \\
 a \left( \Delta \right)
 >
 \left( \mu^{2}\Delta+2\mu-\lambda^{2} \right) / \left(  2\mu\Delta \right) ,
 & \text{if } \mu > 0
\end{cases}.
\end{equation}
Suppose that Property P1 holds.
Applying the strong law of large numbers and the law of iterated logarithm
we obtain that
$ Y^{s}_{n}  \rightarrow 0$  $a.s.$ as $n \rightarrow \infty$
iff
\begin{equation}
 \label{eq:2.3}
 \mathbb{E}  \log \left(
 1+ \left( \mu\Delta+\lambda\sqrt{\Delta}\xi^1_{n} \right) / \left( 1-a \left( \Delta \right)  \Delta \right)
  \right)
 < 0
\end{equation}
(see, e.g., Lemma 5.1 of \cite{Higham2000}).
 Since
\[
 \mathbb{E}  \log\left(  1+\frac{\mu\Delta+\lambda\sqrt{\Delta}\xi^1_{n}}{1-a \left( \Delta \right)  \Delta}  \right)
 =
 \frac{1}{2}\log\left(  \left(  1+\frac{\mu\Delta}{1-a \left( \Delta \right)  \Delta
}\right)  ^{2}-\frac{\lambda^{2}\Delta}{  \left( 1-a \left( \Delta \right) \Delta \right)
^{2}}  \right) ,
\]
inequality (\ref{eq:2.3}) becomes
$
2 \mu\left(  1-a \left( \Delta \right) \Delta\right)
+ \mu^{2} \Delta -  \lambda^{2}
<
0
$,
which is equivalent  to (\ref{eq:2.2}).
This establishes our first claim.

From the assertion of the first paragraph 
we get that
Property P1,
together with
$
\lim_{n \rightarrow \infty} Y^{s}_{n}  = 0
$
$a.s.$,
is equivalent to
(a) $a \left( \Delta \right) \in\left]  -\infty, \min\{p_{1}, p_{3} \} \right[  \cup\left]  p_{2},p_{3}\right[  $ for $\mu<0$;
(b) $a \left( \Delta \right) \in\left]  -\infty,p_{1} \right[  \cup\left] p_{2},+\infty\right[  $  for $\mu=0$ and $\lambda \neq 0$;
and
$a \left( \Delta \right) \in\left]  p_{3},p_{1} \right[ \cup\left]  \max \{ p_{2}, p_{3} \} ,+\infty\right[  $ for $\mu>0$.
This gives the lemma,
because $p_1 < p_3$ (resp. $p_2 > p_3$)
whenever  $\mu < 0$ (resp.  $\mu > 0$).
\end{proof}

\begin{proof}[Proof of Theorem \ref{th:EstLinear}]
In case $\lambda \neq 0$,
using differential calculus we obtain that the function
$
\Delta
\mapsto
\left(  1-\left\vert \lambda\right\vert \sqrt{\Delta}+\mu\Delta\right)  /\Delta
$
attains its global minimum at $4/ \lambda^2$.
Then,  for all $\Delta > 0$ and $\lambda \in \mathbb{R}$ we have
\begin{equation}
\label{eq:2.5}
 \left(  1-\left\vert \lambda\right\vert \sqrt{\Delta}+\mu\Delta\right)  /\Delta
\geq
\mu - \lambda^2 /4 .
\end{equation}
First,
we suppose that $\mu \leq 0$ and $\alpha_1  \left( \Delta \right) > 1/4$.
From (\ref{eq:2.5})
it follows that
$p_1 >  \mu - \alpha_1  \left( \Delta \right) \lambda^{2}$,
which implies
$
a \left( \Delta \right)
\in
\left]  -\infty,p_{1} \right[
$.
Second,
if
$\mu > 0$
and  $\Delta  \geq  2/ \mu $,
then
$
a \left( \Delta \right)
\in
\left]  p_{2} , + \infty  \right[
$.
Third, assume that
$\mu > 0$
and
$\Delta < 2/ \mu $.
Since $\mu > 0$,
for any
$ \Delta < \lambda^2 / \mu^2 $
we have
$
 1-\left\vert \lambda\right\vert \sqrt{\Delta}+\mu\Delta < 1
$.
Using
$2\mu-\lambda^{2}<0$
we get
$\lambda^2 / \mu^2 > 2 / \mu$,
and so
$
p_1
=
\left( 1-\left\vert \lambda\right\vert \sqrt{\Delta}+\mu\Delta \right) / \Delta
$
whenever
$\Delta < 2/ \mu $.
Applying (\ref{eq:2.5}) gives
$
p_1
>
\mu - \alpha_2  \left( \Delta \right) \lambda^2
$,
because $\alpha_2  \left( \Delta \right) > 1/4$.
On the other hand,
we have
$
p_3 < \mu - \alpha_2  \left( \Delta \right) \lambda^2
$
if and only if
$
 2 \mu  - \lambda^2
 <
 \mu \Delta \left( 2\mu - 4 \alpha_2   \left( \Delta \right) \lambda^2  \right)/2
 $,
 which becomes
\begin{equation}
 \label{eq:2.8}
\frac{2}{\mu}
>
 \Delta
 \left(
 1
 + \left(4 \alpha_2  \left( \Delta \right) -1\right) \frac{ \lambda^2} {  \lambda^2 - 2 \mu }
 \right)
\end{equation}
since $2\mu-\lambda^{2} < 0$ and $\mu > 0$.
By $2 / \mu >  \Delta$,
 (\ref{eq:2.8})  holds in case
$
 \alpha_2  \left( \Delta \right)
\leq 1/4 +  \left(  \lambda^2  - 2 \mu \right) \left(  2  -  \mu \Delta \right)  / \left( 8 \lambda^2 \right)
$.
Then
$
p_3 < \mu - \alpha_2 \left( \Delta \right)  \lambda^2
$,
hence
$
a \left( \Delta \right)
\in
\left]  p_{3} , p_{1} \right[
$.
Combining Lemma  \ref{lem:EstLinear} with the above three cases
yields Properties P1 and P2.
\end{proof}

\begin{proof}[Proof  of Theorem \ref{th:EstSistema}]

From (\ref{eq:4.6}) it follows that
$$
V_n = A_{n-1} \left(  \Delta, M  \left( \Delta \right) \right) A_{n-2}  \left(  \Delta, M  \left( \Delta \right) \right)  \cdots A_0  \left(  \Delta, M  \left( \Delta \right) \right) V_0 .
$$
Since $\xi^k_{n}$ are bounded,
$
 \sup_{x\in\mathbb{R}^d,\left\Vert x \right\Vert=1} 
 \mathbb{E} \log_{+} \left( \left\Vert A_0  \left( \Delta,  M  \left( \Delta \right) \right) x \right\Vert \right) 
< \infty,
$
where $\log_{+} \left( x \right) $ stands for the positive part of $\log \left( x \right) $.
Hence, 
$\lim_{n\rightarrow \infty}\frac{1}{n}\log \left( \left\Vert V_n \right\Vert \right)$ exists 
whenever $V_0 \neq 0$,
and only dependents on $V_0$.
Furthermore,
$$
\lim_{n\rightarrow \infty}\frac{1}{n}\log \left( \left\Vert V_n \right\Vert \right)
=
\int_{\left\Vert x \right\Vert=1}
\mathbb{E} \log \left( \left\Vert A_0 \left(  \Delta, M  \left( \Delta \right) \right) x \right\Vert \right) \mu \left( dx \right),
$$
with $\mu$  probability measure (see, e.g.,  Theorem $3.1$ of \cite{Cohen1984}).
This gives (\ref{eq:3.3}).
\end{proof}




%
%

\end{document}